\newcommand{\E }{\operatorname{E}}
\renewcommand{\P}{\operatorname{P}}
\newtheorem{thm}{Theorem}
\newtheorem{cor}[thm]{Corollary}
\newtheorem{rmk}[thm]{Remark}
\begin{document}
\title{\bf Asymptotics of the order statistics for a process with~a~regenerative structure}
\author{Natalia Soja-Kukie\l a
\thanks{Faculty of Mathematics and Computer Science, Nicolaus Copernicus University, ul.~Chopina 12/18, 87-100 Toru\'n, Poland; e-mail address: \texttt{natas@mat.umk.pl}}}
\date{}
\maketitle

\begin{abstract}
In the paper, a regenerative process $\{X_n:n\in\mathbb{N}\}$ with finite mean cycle length is considered. For~$M_n^{(q)}$ denoting the $q$-th largest value  in $\{X_k : 1\leqslant k \leqslant n\}$, we prove that
\begin{equation*}
\sup_{x\in\mathbb{R}} \left|P\left(M^{(q)}_n\leqslant x\right)  - G(x)^n \sum_{k=0}^{q-1}\frac{\left(-\log G(x)^n\right)^k}{k!}\gamma_{q,k}(x)\right| \to 0,\quad \text{as} \quad n\to\infty,
\end{equation*}
for  $G$ and $\gamma_{q,k}$ expressed in terms of maxima over the cycle. The result is illustrated with examples.

\vspace{0.3cm}

\noindent\textit{2000 AMS Mathematics Subject Classification:} 60G70, 60K99, 60J05.

\vspace{0.3cm}

\noindent\textit{Key words and phrases:} asymptotics, order statistics, regenerative process, phantom distribution function
\end{abstract}

\section{Introduction}

Let $\{X_n:n\in\mathbb{N}\}$ be a sequence of random variables. For $j<n$, we denote by $M^{(q)}_{j,n}$ the~$q$-th largest value of $X_{j+1}, X_{j+2}, \ldots, X_n$ if $q\leqslant n-j$ and put $M^{(q)}_{j,n}:=-\infty$ otherwise. For~convenience, we write $M_n^{(q)}$ for $M_{0,n}^{(q)}$,  $M_{j,n}$ for $M^{(1)}_{j,n}$ and $M_n$ for $M^{(1)}_{0,n}$.

In the paper we investigate the asymptotic behaviour of $M_n^{(q)}$, as $n\to\infty$, for $\{X_n\}$ with a regenerative structure. The general theory of regenerative processes can be found in Asmussen \cite{AS}. Here, a minimal formalism corresponding to limit theorems for first $r$ order statistics is adopted. We say that $\{X_n\}$ has an \mbox{\emph{r-max-regenerative structure}} with $r\in\mathbb{N}_+$, if there exist integer-valued random variables $0 < S_0 <S_1<\ldots$ (we put $S_{-1}:=0$), representing regeneration times, such that for cycles 
$ C_n:= \{X_k: S_{n-1} \leqslant k < S_n\} $
of the~length $Y_n:= S_n-S_{n-1}$ and for $\zeta^{(1)}_n \geqslant \zeta^{(2)}_n \geqslant \ldots \geqslant \zeta^{(r)}_n$ denoting first $r$ maxima in $C_n$ (we often write $\zeta_n$ for $\zeta^{(1)}_n$ and put $\zeta^{(q)}_n:=-\infty$ if $q > Y_n$), both of the following conditions hold:
\begin{enumerate}
\item $Y_n$ are independent for $n\geqslant 0$ and identically distributed for $n\geqslant 1 $;
\item $\left(\zeta_n, \zeta_n^{(2)}, \ldots, \zeta_n^{(r)}\right)$  are independent for $n\geqslant 0$ and identically distributed for $n\geqslant 1. $
\end{enumerate}
The process $\{X_n\}$ with an $r$-max-regenerative structure is called \emph{nondelayed} whenever $Y_0$ and $(\zeta_0, \zeta_0^{(2)}, \ldots, \zeta_0^{(r)})$ have the same distributions as $Y_1$ and $(\zeta_1, \zeta_1^{(2)}, \ldots, \zeta_1^{(r)})$, respectively, and \emph{delayed} otherwise. We denote by 
$\mu$ the~mean length of the cycle, i.e., $\mu:=\E Y_1.$
In our considerations $\mu$ is always finite.

It is natural to look for examples of regenerative processes in a class of Markov processes. Suppose that $\{X_n\}$ is a Markov process. We say that a Borel set $R\in\mathcal{B}(\mathbb{R})$ is \emph{recurrent} if  for every $x\in\mathbb{R}$ condition $\P( \inf\{n>0:X_n\in R\} <\infty \, | \, X_0=x)=1$ holds. A~set $R\in\mathcal{B}(\mathbb{R})$ is called a~\emph{regeneration set} if it is recurrent and, moreover, for some $j_0\geqslant 1$, some $\varepsilon\in(0,1)$ and some probability measure $\lambda$ on $(\mathbb{R}, \mathcal{B}(\mathbb{R}))$, for all $x\in R$ and $A\in \mathcal{B}(\mathbb{R})$, the~inequality
$\P(X_{j_0}\in A \, | \, X_0 =x)\geqslant \varepsilon \cdot \lambda(A)$
is true. If $\{X_n\}$ has a regeneration set, we say that it is \emph{Harris recurrent}.
It is shown \cite[Section VII.3]{AS}, that if $\{X_n\}$ is Harris recurrent with $j_0=1$, then one can construct this process simultaneously with a renewal process $\{S_n\}$ which makes $\{X_n\}$ regenerative and then $\{X_n\}$ has an $r$-max-regenerative structure for all $r\in\mathbb{N}_+$. In~the case $R=\{x_0\}$, one can put $S_{n}:=\inf\{k>S_{n-1} : X_k=x_0\}$ for $n\in\mathbb{N}$, with $S_{-1}=0$.

Rootz{\'e}n \cite[Theorem 3.1]{RO} describes the asymptotics of first maxima for~regenerative processes satisfying condition
\begin{equation}\label{C_0}
\P\left(\zeta_0 > \max_{1\leqslant k\leqslant n} \zeta_k\right) \to 0,\quad \text{as}\quad n\to\infty,
\end{equation}
in the following way.
\begin {thm}\label{TH_RO}
Suppose that $\{X_n\}$ has a $1$-max-regenerative structure with $\mu<\infty$ and, moreover, (\ref{C_0}) holds. Let $G(x):=\P(\zeta_1 \leqslant x)^{1 / \mu}$ for $x\in\mathbb{R}$. Then
\begin{equation}\label{PDF}
\sup_{x\in\mathbb{R}} \left|\P(M_n\leqslant x)  -  G(x)^n\right| \to 0\quad\text{as} \quad n\to\infty.
\end{equation}
\end {thm}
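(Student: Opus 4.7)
The plan is to relate $M_n$ to the maxima $\zeta_k$ of the completed cycles via the renewal counter $N(n):=\max\{k\geqslant 0:S_k\leqslant n\}$, and then to exploit the independence of the $\zeta_k$, $k\geqslant 1$, together with the strong law for renewal processes. Since the maximum over the unfinished final cycle is dominated by the full cycle maximum $\zeta_{N(n)+1}$, I would start from the sandwich
\[
\max_{0\leqslant k\leqslant N(n)}\zeta_k \;\leqslant\; M_n \;\leqslant\; \max_{0\leqslant k\leqslant N(n)+1}\zeta_k.
\]
Setting $p(x):=\P(\zeta_1\leqslant x)=G(x)^\mu$ and $L_m:=\max_{1\leqslant k\leqslant m}\zeta_k$, so that $\P(L_m\leqslant x)=p(x)^m$ by independence, the task reduces to comparing $\P(L_{N(n)}\leqslant x)$ with $p(x)^{n/\mu}=G(x)^n$ and absorbing the independent initial factor $\P(\zeta_0\leqslant x)$.

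To handle the random index $N(n)$, fix $\epsilon>0$ and use the renewal SLLN to pick integers $K_\pm\approx(1\pm\epsilon)n/\mu$ with $\P(K_-\leqslant N(n)\leqslant K_+)\to 1$. Combining the sandwich with this concentration and the independence of the $\zeta_k$ for $k\geqslant 1$ yields, uniformly in $x$,
\[
\P(M_n\leqslant x)\leqslant p(x)^{K_-}+o(1)
\qquad\text{and}\qquad
\P(M_n\leqslant x)\geqslant \P(\zeta_0\leqslant x)\,p(x)^{K_++1}-o(1).
\]
Rewriting the right-hand sides as $G(x)^{(1\pm\epsilon)n+O(1)}$ and using the elementary uniform estimate $\sup_{v\in[0,1]}|v^{1\pm\delta}-v|\to 0$ as $\delta\to 0$ will squeeze both bounds against $G(x)^n$ to arbitrary precision, once $\epsilon$ is sent to $0$ after $n$.

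Condition~(\ref{C_0}) is exactly what strips the stray factor $\P(\zeta_0\leqslant x)$ from the lower bound. By independence of $\zeta_0$ and $L_m$, for every $x$
\[
\P(\zeta_0>L_m)\;\geqslant\;\P(\zeta_0>x)\,\P(L_m\leqslant x)\;=\;\P(\zeta_0>x)\,G(x)^{\mu m},
\]
so choosing $m=m(n)\to\infty$ with $\mu m\leqslant(1+\epsilon)n+O(1)$ and invoking~(\ref{C_0}) forces $\sup_x \P(\zeta_0>x)\,G(x)^{(1+\epsilon)n+O(1)}\to 0$, which is precisely the uniform error needed. The hard part will be the uniformity in $x$: pointwise the convergence $\P(M_n\leqslant x)\to \tau$ whenever $G(x)^n\to\tau$ is almost immediate, but controlling the supremum in the transition regimes $G(x)^n\to 0$ and $G(x)^n\to 1$ relies essentially on the substitution $v=G(x)^n$, which converts a small multiplicative perturbation of the exponent into a uniform error over $v\in[0,1]$.
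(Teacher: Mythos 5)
Your proposal is correct and follows essentially the same route as the paper's argument (which, for $q=1$, is Rootz\'en's proof reproduced in the proof of Theorem \ref{SEC_MAX}): sandwich $M_n$ between maxima of completed cycles indexed by the renewal counter, localize that counter by the law of large numbers, use independence of the $\zeta_k$, and invoke (\ref{C_0}) to strip the initial cycle --- your inequality $\P(\zeta_0>x)\,\P(L_m\leqslant x)\leqslant\P(\zeta_0>L_m)$ is a clean uniform way to do the last step, and your $\sup_{v\in[0,1]}|v^{1\pm\delta}-v|\to 0$ device is an equivalent substitute for the paper's reduction to sequences with $G(x_n)^n\to\alpha$. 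The only blemish is cosmetic: since $M_n$ excludes $X_0$ while $C_0$ contains it, the left inequality of your sandwich should read $\max_{1\leqslant k\leqslant N(n)}\zeta_k\leqslant M_n$, which is exactly the bound you actually use.
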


If (\ref{PDF}) holds with a~distribution function $G$, then we call $G$ a~\emph{phantom distribution function} for $\{X_n\}$, following O'Brien \cite{OBRIEN}. We will write $G_*$ for the right endpoint of $G$, i.e., $G_*:=\sup\{x\in\mathbb{R}:G(x) <1\}$. 

We note that assumption (\ref{C_0}) ensures that $\zeta_0$, with an arbitrary distribution in general, is negligible. Some examples of processes satisfying this condition are given in Remark~\ref{C_0_EX}.

Our goal is to describe the asymptotic behaviour of $q$-th maxima for regenerative processes. In order to do this, we combine the proof of Theorem \ref{TH_RO} by Rootz\'en and the methods applied for stationary sequences by Hsing \cite{HSI} (see also: Alpuim \cite{ALP}, Jakubowski \cite{JAK93}). In~Section~\ref{SECOND} we establish Theorem~\ref{SEC_MAX} and Corollary~\ref{WN} describing the~asymptotics of~$M_n^{(q)}$. Examples illustrating these results can be found in Section~\ref{THIRD}. We note that in the case, when the limiting cluster size distribution exists (i.e., for $\beta_i(x)$ defined in Section~\ref{SECOND}, $\beta_i(x)\to\beta_i$, as $x\nearrow G_*$, and $\sum_{i=1}^{\infty}\beta_i =1$ hold), Corollary \ref{WN} is a straightforward consequence of well known results on the exceedance point process; see, e.g., Lindvall \cite[Theorem 2]{LIND}, Rootz{\'e}n \cite[Theorem 3.3]{RO}, Asmussen \cite[Theorem 3.2]{ASS}. In~our approach, we do not involve the theory of point processes and do not assume that the size of the cluster of high threshold exceedances converges in distribution.

\section{Results}\label{SECOND}
We investigate $\{X_n\}$ with an $r$-max-regenerative structure and satisfying~(\ref{C_0}). 
The main result is the following theorem giving an asymptotic representation for $M_n^{(q)}$. 

\begin {thm}\label{SEC_MAX}
Suppose that $\{X_n\}$ has an $r$-max-regenerative structure with $\mu<\infty$ and satisfies~(\ref{C_0}).
Let $G(x):=\P(\zeta_1 \leqslant x)^{1/\mu}$ and $\beta_i(x):= \P(\zeta_1^{(i+1)} \leqslant x < \zeta_1^{(i)} \, | \, \zeta_1 > x)$ for $x\in\mathbb{R}$ and $i\in\{1,2,\ldots,r-1\}$.
Then, for every $q\in\{2,3,\ldots,r\}$,
\begin{equation*}
\sup_{x\in\mathbb{R}} \left|\P\left(M^{(q)}_n\leqslant x\right)  -  G(x)^n  \sum_{k=0}^{q-1}\frac{\left(-\log G(x)^n
\right)^k}{k!}\gamma_{q,k}(x)\right| \to 0, \quad \text{as}\quad n\to\infty,
\end{equation*}
holds with $\gamma_{q,k}(x)\in[0,1]$ given by
\begin{equation*}
\gamma_{q,k}(x)
:=\sum_{(j_1,j_2,\ldots, j_{q-1})\in J_{q,k}}
\frac{k!}{j_1! j_2! \cdots j_{q-1}!}\beta_1(x)^{j_1}\beta_2(x)^{j_2}\cdots \beta_{q-1}(x)^{j_{q-1}},
\end{equation*}
where $J_{q,k}:=\{(j_1,\ldots,j_{q-1})\in\mathbb{N}^{q-1}: \sum_{i=1}^{q-1} j_i=k,\sum_{i=1}^{q-1}ij_i\leqslant q-1\}$.

\end {thm}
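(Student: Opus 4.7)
The plan is to adapt Rootz{\'e}n's proof of Theorem~\ref{TH_RO} by replacing the maximum over a cycle with its \emph{cluster of exceedances}. Writing $I_k(x):=\#\{1\leqslant j\leqslant r:\zeta_k^{(j)}>x\}$ for the number of exceedances of $x$ in the $k$-th cycle, the event $\{M_n^{(q)}\leqslant x\}$ is exactly the event that fewer than $q$ of $X_1,\ldots,X_n$ exceed $x$, i.e.\ that the total cycle-wise exceedance count is at most $q-1$. Combining~(\ref{C_0}) (to discard $\zeta_0$ together with the exceedances in the incomplete cycle after the last regeneration before $n$) with the renewal SLLN $S_n/n\to\mu$, the first step is to prove that, with $k(n):=\lfloor n/\mu\rfloor$,
\[
\sup_{x\in\mathbb{R}}\Bigl|\P\bigl(M_n^{(q)}\leqslant x\bigr)-\P\Bigl(\sum_{k=1}^{k(n)}I_k(x)<q\Bigr)\Bigr|\longrightarrow 0.
\]

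The random variables $(I_k(x))_{k\geqslant 1}$ are i.i.d.\ with values in $\{0,1,\ldots,r\}$. Writing $p(x):=\P(\zeta_1>x)$ and $g_x(s):=\sum_{i=1}^{r}\beta_i(x)s^i$ under the convention $\beta_r(x):=1-\sum_{i=1}^{r-1}\beta_i(x)$, the pgf factors as $\E s^{I_1(x)}=1-p(x)(1-g_x(s))$, so by independence
\[
\P\Bigl(\sum_{k=1}^{k(n)}I_k(x)<q\Bigr)=\sum_{m=0}^{q-1}[s^m]\bigl(1-p(x)(1-g_x(s))\bigr)^{k(n)}.
\]
The elementary inequality $|(1-u)^m-e^{-mu}|\leqslant mu^2$ for $u\in[0,1]$ (with $u=p(x)(1-g_x(s))$, $m=k(n)$) allows the replacement of the right-hand side by the compound-Poisson pgf $e^{-\lambda_n(x)(1-g_x(s))}$, where $\lambda_n(x):=-\log G(x)^n\sim k(n)p(x)$. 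Expanding $e^{\lambda g_x(s)}=\sum_{k\geqslant 0}\lambda^k g_x(s)^k/k!$ by the multinomial theorem and collecting coefficients of $s^0,\ldots,s^{q-1}$ yields
\[
\sum_{m=0}^{q-1}[s^m]\,e^{-\lambda_n(x)(1-g_x(s))}=G(x)^n\sum_{k=0}^{q-1}\frac{(-\log G(x)^n)^k}{k!}\gamma_{q,k}(x),
\]
because the constraint $\sum_{i}ij_i\leqslant q-1$ defining $J_{q,k}$ forces $j_i=0$ for $i\geqslant q$, leaving only $\beta_1(x),\ldots,\beta_{q-1}(x)$ in $\gamma_{q,k}(x)$ and automatically confining the nonzero summands to $k\in\{0,1,\ldots,q-1\}$.

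The main obstacle is making every estimate above \textit{uniform} in $x\in\mathbb{R}$, since the statement prescribes no scaling of $x$ with $n$. The usual split is governed by the size of $G(x)^n$: on $\{x:G(x)^n\geqslant\delta\}$ one has $p(x)=O(1/n)$ uniformly, so $k(n)p(x)^2\to 0$ and $|(1-u)^m-e^{-mu}|\leqslant mu^2$ delivers the Poisson replacement; on $\{x:G(x)^n<\delta\}$ the candidate expression is dominated by the Poisson tail $e^{-\lambda}\sum_{k\leqslant q-1}\lambda^k/k!$ with $\lambda=-\log G(x)^n\geqslant-\log\delta$ (using $\gamma_{q,k}(x)\in[0,1]$, which is immediate from the probabilistic reading $\gamma_{q,k}(x)=\P(\sum_{j=1}^k Z_j\leqslant q-1)$ for $Z_j$ i.i.d.\ with $\P(Z_j=i)=\beta_i(x)$), while the monotonicity of $x\mapsto\P(M_n^{(q)}\leqslant x)$ combined with the approximation already established on the boundary $G(x)^n\approx\delta$ forces $\P(M_n^{(q)}\leqslant x)$ to be comparably small. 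Sending first $n\to\infty$ and then $\delta\to 0$ produces uniform convergence.
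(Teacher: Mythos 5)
Your skeleton coincides with the paper's: use (\ref{C_0}) and the renewal law of large numbers to replace $\P(M_n^{(q)}\leqslant x)$ by the probability that the i.i.d.\ cycle-wise exceedance counts over $\lfloor n/\mu\rfloor$ complete cycles sum to less than $q$, and then exploit independence. Where you genuinely differ is in the computational core: the paper fixes a sequence $x_n$ with $G(x_n)^n\to\alpha$ (the subsequence characterization of uniform convergence) and expands the event by the number of exceeding cycles, evaluating each multinomial term directly; you instead encode the cluster-size law in the generating function $g_x$, pass to the compound-Poisson pgf, and read off $\gamma_{q,k}(x)$ as coefficients. The coefficient extraction is correct (the constraint $\sum_i ij_i\leqslant q-1$ does kill $j_i$ for $i\geqslant q$ and forces $k\leqslant q-1$), and your probabilistic reading $\gamma_{q,k}(x)=\P(Z_1+\dots+Z_k\leqslant q-1)$ is a cleaner justification of $\gamma_{q,k}(x)\in[0,1]$ than anything in the paper. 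One repair is needed here: the pointwise inequality $|(1-u)^m-e^{-mu}|\leqslant mu^2$ for $u\in[0,1]$ does not by itself control differences of \emph{coefficients} (or partial sums of them) of the two generating functions. You should either invoke the Le Cam-type total variation bound $d_{TV}\bigl(\sum_{k\leqslant k(n)}I_k(x),\,\mathrm{CP}(k(n)p(x),g_x)\bigr)\leqslant k(n)p(x)^2$, or run your inequality on the contour $|s|=1$, where $\Re\bigl(1-g_x(s)\bigr)\geqslant 0$ keeps the exponentials bounded. This is standard and does not change the architecture.

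The genuine gap is in the region $\{x: G(x)^n<\delta\}$. There you must show that $\P(M_n^{(q)}\leqslant x)$ itself is uniformly small, and your proposed mechanism --- monotonicity of $x\mapsto\P(M_n^{(q)}\leqslant x)$ together with the approximation ``on the boundary $G(x)^n\approx\delta$'' --- can fail: if $G$ has an atom, $G(x)^n$ may jump from far below $\delta$ to a value of order $1$, so the smallest admissible $x'$ above the region has $G(x')^n$ bounded away from $0$ and the resulting bound $\P(M_n^{(q)}\leqslant x)\leqslant\P(M_n^{(q)}\leqslant x')$ is useless; there need not exist any $x'$ with $G(x')^n$ close to $\delta$. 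The paper closes this case ($\alpha=0$ in its notation) by a direct blocking bound: $M_n^{(q)}\leqslant x$ forces at most $q-1$ of the first $\lfloor n/(2\mu)\rfloor$ cycles to have $\zeta_k>x$, hence at least one of $q$ arithmetic subsequences of cycles is entirely exceedance-free, giving
\begin{equation*}
\P\left(M_n^{(q)}\leqslant x\right)\;\leqslant\; q\,\P(\zeta_1\leqslant x)^{\lfloor n/(2\mu)\rfloor/q}+o(1)\;=\;q\,G(x)^{n/(2q)}+o(1)\;\leqslant\; q\,\delta^{1/(2q)}+o(1),
\end{equation*}
uniformly on the region (the paper carries this out for $q=2$ via the odd/even cycle split). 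With that substitution, and with the total variation version of your Poisson step, your argument goes through.
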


\begin{proof}
Observe that it is sufficient to show that
\begin{eqnarray*}
\P\left(M^{(q)}_n\leqslant x_n\right)  -  G(x_n)^n \sum_{k=0}^{q-1}\frac{\left(-\log G(x_n)^n
\right)^k}{k!}\gamma_{q,k}(x_n) \to 0, \quad \text{as}\quad n\to\infty,
\end{eqnarray*}
for an arbitrary $\{x_n\}\subset\mathbb{R}$ satisfying $G(x_n)^n\to\alpha $ with some $\alpha \in[0,1]$. Let $\{x_n\}$ be such a~sequence.

We present a detailed proof for $r=2$. In this case we shall prove that
\begin{equation}\label{SUFF}
\P\left(M^{(2)}_n\leqslant x_n\right)  -  G(x_n)^n (1-\beta_1(x_n) \log G(x_n)^n) \to 0,\quad\text{as} \quad n\to\infty.
\end{equation}
Using arguments similar to those of \cite{RO}, for $\nu_n$ defined as 
$\nu_n:=\inf\{k\in\mathbb{N}: S_k>n\}$
and for an~arbitrary $\delta\in (0,1/\mu)$, we obtain
\begin{multline}\label{E1}
\P\left(M^{(2)}_{S_{\lfloor n/\mu + n\delta \rfloor} }\leqslant x_n\right) - \P\left(\left|\frac{\nu_n + 1}{n} - \frac{1}{\mu}\right| > \delta\right) \\
\leqslant\P\left(M_{S_{\nu_n}}^{(2)}\leqslant x_n\right)\leqslant \P\left(M_n^{(2)}\leqslant x_n\right) \leqslant \P\left(M_{S_{\nu_n -1}}^{(2)}\leqslant x_n,\; \nu_n>0\right) + \P(\nu_n = 0)\\
\leqslant \P\left(M^{(2)}_{S_{\lfloor n/\mu - n\delta \rfloor} }\leqslant x_n\right) + \P\left(\left|\frac{\nu_n-1}{n} - \frac{1}{\mu}\right| > \delta\right) + \P(\nu_n = 0),
\end{multline}
where $\P(\nu_n = 0)\to 0$ and  $\P(|\nu_n/n - 1/\mu| > \delta)\to 0$ due to the law of large numbers. Obviously, we also have
$$\P\left(M^{(2)}_{S_{\lfloor n/\mu + n\delta \rfloor} }\leqslant x_n\right) \leqslant \P\left(M^{(2)}_{S_{\lfloor n/\mu \rfloor} }\leqslant x_n\right) \leqslant \P\left(M^{(2)}_{S_{\lfloor n/\mu - n\delta \rfloor} }\leqslant x_n\right)$$
and, moreover,
\begin{eqnarray*}
\lefteqn{\left|\P\left(M^{(2)}_{S_{\lfloor n/\mu - n\delta \rfloor} }\leqslant x_n\right) - \P\left(M^{(2)}_{S_{\lfloor n/\mu +  n\delta \rfloor} }\leqslant x_n\right)\right|} \\
& \leqslant & \P\left(\zeta_k>x_n \text { for some } \lfloor n/\mu - n\delta \rfloor < k \leqslant \lfloor n/\mu + n\delta \rfloor +1 \right)\\
&=& 1 - \P(\zeta_1 \leqslant x_n)^{2n\delta} + o(1) \; = \;  1-G(x_n)^{2n \mu \delta} +o(1) \; = \; 1 - \alpha ^{2\mu\delta}+o(1).
\end{eqnarray*}

First, we prove (\ref{SUFF}) for $\alpha \in(0,1]$. From the above considerations and the~fact that $1 - \alpha ^{2\mu\delta} \to 0$, as $\delta\to 0$, we conclude:
\begin{equation*}
\P\left(M_n^{(2)}\leqslant x_n\right)=\P\left(M_{S_{\lfloor n/\mu \rfloor}}^{(2)}\leqslant x_n\right)+o(1).
\end{equation*}
Observe that we also have
\begin{eqnarray*}
\lefteqn{\left|\P\left(M_{S_{\lfloor n/\mu \rfloor}}^{(2)}\leqslant x_n\right) - \P\left(M^{(2)}_{S_0-1,S_{\lfloor n/\mu \rfloor}-1}\leqslant x_n \right)\right|}\\
&\leqslant & \P\left(\zeta_0>\max_{1\leqslant k\leqslant  \lfloor n/\mu \rfloor}\zeta^{(2)}_k\right) + \P\left(X_{S_{\lfloor n/\mu \rfloor}} > x_n\right)\\
&\leqslant &\P\left(\zeta_0>\max_{1\leqslant 2k\leqslant  \lfloor n/\mu \rfloor}\zeta_{2k}\right)+ \P\left(\zeta_0>\max_{1\leqslant 2k+1\leqslant  \lfloor n/\mu \rfloor}\zeta_{2k+1}\right) + \P\left(X_{S_{\lfloor n/\mu \rfloor}} > x_n\right),
\end{eqnarray*}
which combined with condition (\ref{C_0}) and the convergence $\P(X_{S_{\lfloor n/\mu \rfloor}} > x_n)\to 0$,  implies
\begin{equation}\label{APPROX2}
\P\left(M_{S_{\lfloor n/\mu \rfloor}}^{(2)}\leqslant x_n\right) = \P\left(M^{(2)}_{S_0-1,S_{\lfloor n/\mu \rfloor}-1}\leqslant x_n \right) + o(1).
\end{equation}
Next, note that
\begin{multline*}
\P\left(M^{(2)}_{S_0-1,S_{\lfloor n/\mu \rfloor}-1}\leqslant x_n \right) \\
= \P\left(M_{S_0-1,S_{\lfloor n/\mu \rfloor}-1}\leqslant x_n \right) + \P\left(M^{(2)}_{S_0-1,S_{\lfloor n/\mu \rfloor}-1}\leqslant x_n <M_{S_0-1,S_{\lfloor n/\mu \rfloor}-1} \right).
\end{multline*}
We can approximate the first summand of the right-hand side of the above equality as follows:
\begin{equation*}
\P\left(M_{S_0-1,S_{\lfloor n/\mu \rfloor}-1}\leqslant x_n \right) 
= \P(\zeta_1\leqslant x_n)^{\lfloor n/\mu \rfloor}= G(x_n)^n + o(1).
\end{equation*}
For the second one we obtain that
\begin{eqnarray*}
\lefteqn{\P\left(M^{(2)}_{S_0-1,S_{\lfloor n/\mu \rfloor}-1}\leqslant x_n <M_{S_0-1,S_{\lfloor n/\mu \rfloor}-1} \right)}\\
& =& \sum_{k=1}^{\lfloor n/\mu \rfloor} \P\left(\zeta^{(2)}_k\leqslant x_n < \zeta_k,\, \zeta_i\leqslant x_n \text{ for }i\in\{1,2,\ldots, \lfloor n/\mu \rfloor\} \backslash \{k\}\right)\\
& =& \lfloor n/\mu\rfloor \P(\zeta_1> x_n) \P\left(\zeta_1^{(2)}\leqslant x_n\, \Big| \, \zeta_1 >x_n\right)\P(\zeta_1 \leqslant x_n)^{ \lfloor n/\mu\rfloor - 1}\\
& =& n/\mu \cdot (1-G(x_n)^\mu) \beta_1(x_n) G(x_n)^n + o(1)
\; = \; - G(x_n)^n \beta_1(x_n) \log G(x_n)^n + o(1).
\end{eqnarray*}
The~convergence (\ref{SUFF}) for $\alpha \in(0,1]$ follows.

To finish the proof in the case $r=2$, we need to show that (\ref{SUFF}) holds when $\alpha =0$. Since $\beta_1(x_n)\in[0,1]$, we easily get that $G(x_n)^n (1-\beta_1(x_n) \log G(x_n)^n) \to 0$.
It is sufficient to prove that also $\P(M_n^{(2)}\leqslant x_n)\to 0$. Applying (\ref{E1}) with $\delta:=(2\mu)^{-1}$, we obtain
$$ \P\left(M_n^{(2)}\leqslant x_n\right)  \leqslant \P\left(M^{(2)}_{S_{\lfloor n/(2\mu) \rfloor} }\leqslant x_n\right) + \P\left(\left|\frac{\nu_n-1}{n} - \frac{1}{\mu}\right| > \frac{1}{2\mu}\right) + \P(\nu_n=0).$$
Since both the second and the third summand of the right-hand side tend to zero, we conclude that
\begin{eqnarray*}
\P\left(M_n^{(2)}\leqslant x_n\right) & \leqslant &\P\left(M^{(2)}_{S_{\lfloor n/(2\mu) \rfloor} } \leqslant x_n\right) + o(1) \\
&\leqslant & \P(\zeta_k \leqslant x_n \textrm{ for all odd }k\in\{1,2,\ldots, \lfloor n/(2\mu) \rfloor \}) \\
&& \; +\P(\zeta_k \leqslant x_n \textrm{ for all even }k\in\{1,2,\ldots, \lfloor n/(2\mu) \rfloor \}) + o(1)\\
&=& 2G(x_n)^{n/4}+o(1) \; = \; o(1),
\end{eqnarray*}
which completes the proof for $r=2$.

In the case $r>2$, using arguments similar to the ones presented above, we show that
\begin{equation*}
\P\left(M_n^{(q)}\leqslant x_n\right)=\P\left(M_{S_{\lfloor n/\mu \rfloor}}^{(q)}\leqslant x_n\right)+o(1)=\P\left(M^{(q)}_{S_0-1,S_{\lfloor n/\mu \rfloor}-1}\leqslant x_n \right)+o(1)
\end{equation*}
holds for each $q\in\{2,3,\ldots,r\}$. Then, following, e.g., Hsing \cite[Corollary 3.2]{HSI}, we get that
\begin{multline*}
\P\left(M^{(q)}_{S_0-1,S_{\lfloor n/\mu \rfloor}-1}\leqslant x_n \right)\\
= \sum_{k=0}^{q-1} \P\left(M^{(q)}_{S_0-1,S_{\lfloor n/\mu \rfloor}-1}\leqslant x_n, \#\{l\in\{1,2,\ldots,\lfloor n/\mu \rfloor\}: \zeta_l>x_n\} = k \right),
\end{multline*}
with $k$ denoting the number of cycles with maxima exceeding $x_n$,
and, furthermore,
\begin{eqnarray*}
\lefteqn{\P\left(M^{(q)}_{S_0-1,S_{\lfloor n/\mu \rfloor}-1}\leqslant x_n, \#\{l\in\{1,2,\ldots,\lfloor n/\mu \rfloor\}: \zeta_l>x_n\} = k \right)}\\
&=& \!\!\!\!\!\! \sum _{(j_1,j_2, \ldots, j_{q-1}) \in J_{q,k}} \! \frac{\lfloor n/\mu \rfloor !}{ j_1! j_2! \cdots j_{q-1}!(\lfloor n/\mu \rfloor \! - \! k)!} \P(\zeta_1 \leqslant x_n)^{\lfloor n/\mu \rfloor \! - \!k} \prod_{i=1}^{q-1}\P\left(\zeta_1^{(i+1)} \leqslant x_n <\zeta_1^{(i)}\right)^{j_i} ,
\end{eqnarray*}
with $j_i$ denoting the number of cycles with exactly $i$ exceedances. To complete the proof of the theorem, it is sufficient to notice that
\begin{eqnarray*}
\lefteqn{\frac{\lfloor n/\mu \rfloor !}{ j_1! \cdots j_{q-1}!(\lfloor n/\mu \rfloor \!\! - \!\! k)!} \P(\zeta_1 \! \leqslant \! x_n)^{\lfloor n/\mu \rfloor \! - \! k}\P\left(\zeta_1^{(2)} \! \leqslant \! x_n \! < \! \zeta_1\right)^{j_1} \cdots \P\left(\zeta_1^{(q)} \! \leqslant \! x_n \! < \! \zeta_1^{(q\!-\!1)}\right)^{j_{q-1}}}\\
&=& \frac{1}{ j_1! \cdots j_{q-1}!}\lfloor n/\mu\rfloor^k \P(\zeta_1 \! \leqslant \! x_n)^{\lfloor n/\mu \rfloor}(1-\P(\zeta_1 \!\leqslant \! x_n))^k \beta_1(x_n)^{j_1}\cdots \beta_{q-1}(x_n)^{j_{q-1}} \! + \! o(1)\\
&=& \frac{1}{ j_1! \cdots j_{q-1}!} G(x_n)^n (-\log G(x_n)^n)^k \beta_1(x_n)^{j_1}\cdots \beta_{q-1}(x_n)^{j_{q-1}} \! + \! o(1)
\end{eqnarray*}
holds.
\end{proof}

\begin{cor}\label{WN}
Let the assumptions of Theorem \ref{SEC_MAX} be satisfied and let 
$$\beta_i(x)\to \beta_i\quad \text{as} \quad x \nearrow G_*,\quad\text{for some}\quad \beta_i\in[0,1], \quad\text{for all}\quad i\in\{1,2,\ldots, r-1\}.$$
Then
\begin{equation}\label{LIM}
\sup_{x\in\mathbb{R}} \left|\P\left(M^{(q)}_n\leqslant x\right)  -  G(x)^n  \sum_{k=0}^{q-1}\frac{\left(-\log G(x)^n
\right)^k}{k!}\gamma_{q,k}\right| \to 0, \quad \text{as}\quad n\to\infty,
\end{equation}
holds with
$
\gamma_{q,k}
:=\sum_{(j_1,j_2,\ldots, j_{q-1})\in J_{q,k}}
\frac{k!}{j_1! j_2! \cdots j_{q-1}!}\beta_1^{j_1}\beta_2^{j_2}\cdots \beta_{q-1}^{j_{q-1}}.
$
\end{cor}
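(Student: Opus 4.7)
The plan is to deduce (\ref{LIM}) from Theorem~\ref{SEC_MAX} by replacing $\gamma_{q,k}(x)$ with its limit $\gamma_{q,k}$ and controlling the resulting error uniformly in $x$. By the triangle inequality the task reduces to proving
\begin{equation*}
R_n := \sup_{x\in\mathbb{R}} G(x)^n \sum_{k=0}^{q-1} \frac{\bigl|{-}\log G(x)^n\bigr|^k}{k!} \bigl|\gamma_{q,k}(x) - \gamma_{q,k}\bigr| \to 0.
\end{equation*}

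Two elementary observations drive the argument. First, each $\gamma_{q,k}(x)$ is a fixed polynomial in $\beta_1(x),\ldots,\beta_{q-1}(x)$, so continuity together with the hypothesis $\beta_i(x)\to\beta_i$ gives $\gamma_{q,k}(x)\to\gamma_{q,k}$ as $x\nearrow G_*$. Moreover, $J_{q,0}=\{(0,\ldots,0)\}$ forces $\gamma_{q,0}(x)=\gamma_{q,0}=1$, and right-continuity of $G$ together with the definition of $G_*$ yields $G(x)=1$ for $x\geqslant G_*$, so every summand in $R_n$ vanishes identically on $[G_*,\infty)$. Second, an elementary calculus computation gives $\sup_{t\in(0,1]} t(-\log t)^k = k^k e^{-k}$, yielding a uniform bound $G(x)^n\bigl|{-}\log G(x)^n\bigr|^k/k! \leqslant k^k e^{-k}/k!$ independent of $x$ and $n$.

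Fix $\epsilon>0$ and choose $x_\epsilon<G_*$ with $|\gamma_{q,k}(x)-\gamma_{q,k}|<\epsilon$ for all $x\in(x_\epsilon,G_*)$ and all $k\in\{0,1,\ldots,q-1\}$. Set $c:=G(x_\epsilon)<1$ and split the supremum according to whether $G(x)>c$ or $G(x)\leqslant c$. In the first region, monotonicity of $G$ forces $x>x_\epsilon$; combined with the first observation and the uniform factor bound this region contributes at most $\epsilon\sum_{k=0}^{q-1} k^k e^{-k}/k!=O(\epsilon)$. In the second region, $G(x)^n\leqslant c^n$; once $n$ is so large that $c^n<e^{-(q-1)}$, the map $t\mapsto t(-\log t)^k$ is increasing on $[0,c^n]$, hence each summand is bounded by $c^n\bigl(n\log(1/c)\bigr)^k/k!\to 0$, and since $|\gamma_{q,k}(x)-\gamma_{q,k}|\leqslant 2$ the whole region contributes $o(1)$. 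Thus $\limsup_n R_n = O(\epsilon)$, and letting $\epsilon\to 0$ finishes the proof.

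The sole delicate point is the choice of $x_\epsilon$ strictly below $G_*$, which guarantees $c<1$ and hence that the geometric factor $c^n$ dominates the polynomial-in-$n$ logarithmic blow-up on the left-hand region; the boundary $[G_*,\infty)$ requires a small separate check but is trivial because the sum is identically zero there. Beyond that, only polynomial continuity and one-variable calculus are involved.
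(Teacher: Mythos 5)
Your argument is correct: the paper states Corollary~\ref{WN} without any proof, treating it as an immediate consequence of Theorem~\ref{SEC_MAX}, and your reduction to showing $\sup_x G(x)^n\sum_k \frac{\left(-\log G(x)^n\right)^k}{k!}\left|\gamma_{q,k}(x)-\gamma_{q,k}\right|\to 0$ is exactly the step the paper leaves implicit. The details you supply --- the uniform bound $\sup_{t\in(0,1]}t(-\log t)^k=k^ke^{-k}$, the split at $c=G(x_\epsilon)$, the observation that $\gamma_{q,0}\equiv 1$ and that all terms vanish on $[G_*,\infty)$ --- are all sound, so this is a complete and correct justification of what the author asserts without proof.
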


\begin{rmk}\label{I_EX}
It is quite easy to define a regenerative process $\{X_n\}$ with $\beta_i(\cdot)$ convergent only for $i\in I$, where $I\subset\mathbb{N}_+$ is fixed in advance.

\begin{enumerate}

\item[a)] Let $p(m,i)\geqslant 0$ satisfy $\sum_{i\in\mathbb{N}_+}p(m,i)=1$ for every $m\in\mathbb{N}_+$. Let $\{(V_k,Y_k):k\in\mathbb{N}\}$ be an i.i.d. sequence with the distribution of $(V_1,Y_1)$ given by $\P(V_1=m)=2^{-m}$ and $\P(Y_1=i\,|\, V_1=m)=p(m,i)$ for all $m,i\in\mathbb{N}_+$. Define $X_n:=V_{k(n)}$ with $k(n)$ such that $\sum_{j=0}^{k(n)-1}Y_j \leqslant n <\sum_{j=0}^{k(n)}Y_j$. Then $\{X_n\}$ is regenerative and we have $\beta_i(m)=2^m\sum_{v=m+1}^\infty 2^{-v}p(v,i)$. It is clear that $p(m,i)\to \beta_i$ implies $\beta_i(m)\to \beta_i$, as~$m\to\infty$. Since $\beta_i(m)=(\beta_i(m+1) + p(m+1,i))/2$, we conclude that $p(m,i)\to \beta_i$ follows from $\beta_i(m)\to \beta_i$.

\item[b)] Let $p(x,i)\geqslant 0$ be such that $\sum_{i\in\mathbb{N}_+}p(x,i)=1$ holds for every $x>0$ and let the function $\phi_{i}(x):=p(x,i)$ be uniformly continuous for every $i\in\mathbb{N}_+$.  Let $\{(V_k, Y_k):k\in\mathbb{N}\}$ be an~i.i.d. sequence with the distribution of $(V_1,Y_1)$ given by $\P(V_1>x)=e^{-x}$ and $\P(Y_1=i\,|\, V_1=x)=p(x,i)$ for all $x>0$ and $i\in\mathbb{N}_+$. Consider $\{X_n\}$ defined as above. Then $\{X_n\}$ is regenerative and $\beta_i(x)=e^x\int_{x}^\infty e^{-v}\phi_i(v) dv$. It is easy to show that $\phi_{i}(x)\to \beta_i$ implies $\beta_i(x)\to\beta_i$, as $x\to\infty$. Moreover, since 
$\beta_i(x)=e^{-\varepsilon}\beta_i(x+\varepsilon) + \int_{x}^{x+\varepsilon} e^{-(v-x)}\phi_{i}(v) dv$, for all $\varepsilon > 0$,
and $\phi_i$ is uniformly continuous, we obtain that $\phi_{i}(x)\to \beta_i$ is a consequence of $\beta_i(x)\to\beta_i$.

\end{enumerate}

\end{rmk}

\begin{rmk}\label{C_0_EX}
Condition (\ref{C_0}) guarantees that the cycle $C_0$ does not affect the asymptotic behaviour of $M^{(q)}_n$, for any $q\in\mathbb{N}$; see, e.g., the argumentation for (\ref{APPROX2}) in the proof of Theorem~\ref{SEC_MAX}. 
\begin{enumerate}
\item[a)] If $\{X_n\}$ with a 1-max-regenerative structure is nondelayed, then it satisfies (\ref{C_0}).
\item[b)] The process $\{X_n\}$, which is  stationary and regenerative in the sense of Asmussen \cite{AS}, with $\mu<\infty$, fulfills (\ref{C_0}); it follows from \cite[Corollary VI.1.5]{AS}.
\item[c)] $\{X_n\}$ with a $1$-max-regenerative structure satisfies (\ref{C_0}) if and only if \mbox{$x_*^{\zeta_0}\leqslant x_*^{\zeta_1}$} holds and $\P(\zeta_0=x_*^{\zeta_1})>0$ implies $\P(\zeta_1=x_*^{\zeta_1})>0$, where $x_*^{\zeta_i}:=\sup\{x\in\mathbb{R}:\P(\zeta_i\leqslant x) <1\}$ for $i\in\{0,1\}$.
\end{enumerate}
\end{rmk}

\section{Examples}\label{THIRD}
In this section we give a few examples of nondelayed regenerative Markov processes and apply Theorem~\ref{TH_RO} and Corollary~\ref{WN} to describe their phantom distribution functions~$G$ and to calculate the constants $\beta_1, \beta_2, \ldots$ such that condition (\ref{LIM}) holds.

In our considerations, we make use of the stopping moments:
\begin{eqnarray*}
\tau_a  &:= & \inf\{k>0: X_k=a\},\\
\tau_{a^+} &:= & \inf\{k>0: X_k>a\},\\
\tau_{a,b} &:= & \inf\{k>0: X_l = a  \text{ and } X_k= b\text{ for some } 0<l<k\},
\end{eqnarray*}
defined for $a,b\in\mathbb{R}$.

In some of the presented examples,
a \emph{long-tailed} distribution function $F$, i.e., satisfying condition 
\begin{equation}\label{EL}
\lim_{x\to\infty} \frac{1-F(x+y)}{1-F(x)}= 1,\quad\text{for all}\quad y>0,
\end{equation}
plays a crucial role. We recall that every distribution function~$F$ which is \emph{subexponential}
%, i.e., fulfills
%$$\lim_{x\to\infty}\frac{1-F^{*2}(x)}{1-F(x)}=2,$$
is also long-tailed \cite[Lemma 1.3.5]{KLUP}.

Sometimes, it is convenient to describe the~phantom distribution function $G$ giving a~distribution function $\tilde{G}$ such that the following convergence
$$\sup_{x\in\mathbb{R}} \left| G(x)^n - \tilde{G}(x)^n\right|\to 0,\quad \text{as} \quad n\to\infty,$$
holds. If $G$ and $\tilde{G}$ satisfy the above condition, we call them \emph{strictly tail-equivalent}.

In two examples, we investigate the \emph{Lindley process} defined recursively as 
\begin{equation}\label{DEF_LINDLEY}
X_n := \left\{\begin{array}{ll}
0 &\text{for}\quad n=0;\\
\max\{X_{n-1}+Z_{n-1},0\} &\text{for}\quad n\in\mathbb{N}_+,
\end{array}\right.
\end{equation}
for $\{Z_n:n\in\mathbb{N}\}$ a sequence of i.i.d. random variables with  $\E Z_0 <0$. Note that such a process is a~nondelayed regenerative Markov process with the regeneration set  $R=\{0\}$.

\subsection{Geometric jump}

Let $\{X_n\}$ be a Markov chain with the state space $\mathbb{N}$ such that $X_0=0$ and the transition probabilities are given by 
$\P(0,k)=p(1-p)^k$ for $k\in\mathbb{N}$ and $\P(k,k-1)=1$ for $k\in\mathbb{N}_+$, for some $p\in(0,1)$. Then  $\{X_n\}$ is regenerative and nondelayed with the~regeneration set $R=\{0\}$ and $\mu=p^{-1}$. The phantom distribution function $G$ is given by
$$G(x)=\P(\zeta_0\leqslant x)^{1/\mu} = \P(X_1\leqslant \lfloor x \rfloor)^p = \left (1-(1-p)^{\lfloor x \rfloor +1}\right)^p,\quad\text{for} \quad x>0,$$
and we calculate $\beta_i$ for any $i\geqslant 1$ as follows:
\begin{equation*}
\beta_i=\lim_{x\to\infty}\P\left(\zeta_0^{(i+1)}\leqslant x < \zeta_0^{(i)} \,\Big|\, \zeta_0 > x\right)=\lim_{x\to\infty}\P\left(X_1=\lfloor x \rfloor + i \,\big|\, X_1 > \lfloor x \rfloor \right)=p(1-p)^{i-1}.
\end{equation*}

\subsection{Reflected simple random walk}
Let $\{Z_n\}$ be an i.i.d. sequence such that \mbox{$\P(Z_0=1)=p$} and \mbox{$\P(Z_0=-1)=q$} with $q:=1-p$, for some \mbox{$p<1/2$}.
Let $\{X_n\}$ be the process defined by (\ref{DEF_LINDLEY}).
Then 
$$\mu = \E\tau_0\mathbb{1}_{\{X_1=0\}}+\E\tau_0\mathbb{1}_{\{X_1=1\}} = q + p(1+(q-p)^{-1})=  q/(q-p),$$
where $\E(\tau_0-1\, | \, X_1=1)=(q-p)^{-1}$ follows from Wald's identity. Applying the optional stopping theorem to the classical gambler's ruin problem, we get
$$\P(\tau_0<\tau_{a+b}\,|\,X_0=a)=\frac{(q/p)^{a+b}-(q/p)^a}{(q/p)^{a+b}-1},\quad\text{for all}\quad a,b\in\mathbb{N}_+.$$
Hence, the phantom distribution function~$G$ is given by
\begin{eqnarray*}
G(x)&=&\P(\zeta_0\leqslant x)^{1/\mu}\\
&=&\left(\P(X_1=0\,|\,X_0=0)+\P\left(X_1=1, \tau_0<\tau_{\lfloor x\rfloor+1}\,|\, X_0=0\right)\right)^{1/\mu}\\
&=& \left(q + p \frac{(q/p)^{\lfloor x\rfloor +1}-(q/p)}{(q/p)^{\lfloor x\rfloor+1}-1}\right)^{1 - p/q},
\end{eqnarray*}
for $x>0$, and we can calculate the constant $\beta_1$ as follows:
\begin{eqnarray*}
\beta_1 &=& \lim_{x\to\infty} \P\left(\zeta_0^{(2)} \leqslant x \,\Big| \, \zeta_0 > x\right) \\
&=& \lim_{x\to\infty} \P\left(\tau_{\lfloor x\rfloor+1,0} < \tau_{\lfloor x\rfloor+1,\lfloor x\rfloor+1}, X_{\tau_{\lfloor x\rfloor+1}+1}=\lfloor x \rfloor \,\Big|\,  \tau_{\lfloor x\rfloor+1}<\tau_0 \right)\\
&=&\lim_{x\to\infty} q\frac{(q/p)^{\lfloor x\rfloor+1}-(q/p)^{\lfloor x\rfloor }}{(q/p)^{\lfloor x\rfloor+1}-1} \; = \; q-p.
\end{eqnarray*}
One can obtain $\beta_i$ for $i\geqslant 2$ in a similar (but more complicated) way. An interesting analysis of the reflected simple random walk has been done by Lindvall \cite[Section 3.B]{LIND}.

\subsection{Lindley process with long-tailed steps}
Let $\{Z_n\}$ be a sequence of i.i.d. random variables with a distribution function $F$ satisfying condition~(\ref{EL}) and such that $\E Z_1<0$.
Let $\{X_n\}$ be the process defined by (\ref{DEF_LINDLEY}) and suppose that $\mu<\infty$ holds.
It is known \cite[Theorem 2.1]{ASS} that then
\begin{equation}\label{ASS_OGON}
\frac{\P(\zeta_0 > x)}{\mu(1-F(x))}\to 1 \quad \text{as} \quad x\to\infty.
\end{equation}
Thus the distribution functions $G(x)=\P(\zeta_0\leqslant x)^{1/\mu}$ and $F(x)$ are strictly tail-equivalent.
We will show that $\beta_i=0$ for every $i\in\mathbb{N}_+$. For a fixed $i\in\mathbb{N}_+$ and an arbitrary $\varepsilon > 0$, let $y>0$ be chosen so that $F(-y/i)<\varepsilon/i$. Then
\begin{eqnarray*}
\beta_i(x)&\leqslant &\P\left(\zeta_0^{(i+1)}\leqslant x \,\Big|\, \zeta_0>x\right)=\frac{\P\left(\zeta^{(i+1)}_0 \leqslant x, \zeta_0 >x\right)}{\P(\zeta_0>x+y)} \cdot \frac{\P(\zeta_0>x+y)}{\P(\zeta_0>x)}\\
&=& \frac{\P\left(\zeta^{(i+1)}_0 \leqslant x, \zeta_0 >x+y\right) + \P\left(\zeta^{(i+1)}_0 \leqslant x, \zeta_0 \in(x,x+y]\right)}{\P(\zeta_0>x+y)}  \cdot \frac{\P(\zeta_0>x+y)}{\P(\zeta_0>x)}\\
&\leqslant& \left(\P\left(\zeta^{(i+1)}_0 \leqslant x \,\Big|\, \zeta_0>x+y\right) + \frac{\P(\zeta_0 \in(x,x+y])}{\P(\zeta_0>x+y)}\right)\cdot \frac{\P(\zeta_0>x+y)}{\P(\zeta_0>x)}.
\end{eqnarray*}
Combining (\ref{EL}) and (\ref{ASS_OGON}), we conclude that
$$\frac{\P(\zeta_0 \in(x,x+y])}{\P(\zeta_0>x+y)}\to 0\quad \text{and} \quad \frac{\P(\zeta_0>x+y)}{\P(\zeta_0>x)}\to 1,\quad \text{as} \quad x\to\infty.$$
Moreover, we have
\begin{eqnarray*}
\P\left(\zeta^{(i+1)}_0 \leqslant x \,\bigg|\, \zeta_0>x+y\right) 
&\leqslant &\P\left(\min_{1\leqslant j \leqslant i}X_{j+\tau_{(x+y)^+}} \leqslant x  \,\bigg|\, \tau_{(x+y)^+} < \tau_0\right)\\
&\leqslant & \P\left(\min_{0\leqslant j \leqslant i-1} Z_{j+\tau_{(x+y)^+}} \leqslant -\frac{y}{i}  \,\bigg|\, \tau_{(x+y)^+} < \tau_0\right)\\
&\leqslant & i F\left(-\frac{y}{i}\right) \; < \; \varepsilon.
\end{eqnarray*}
Since $\varepsilon>0$ is arbitrarily small, $\beta_i(x)\to 0$ follows and hence $\beta_i=0$.

\subsection{Regenerative Markov chain with $\beta_1, \beta_2, \ldots$ given in advance}\label{EX_CON}
Let $\{\beta_i : i\in\mathbb{N_+}\}\subset [0,1]$ be a sequence satisfying the following assumptions:
\begin{equation*}
\sum_{i=1}^\infty\beta_i =1  \quad \text{and} \quad \sum_{i=1}^\infty i \beta_i  < \infty.
\end{equation*}
For such $\{\beta_i\}$, it is easy to find a regenerative (non-Markovian) process with $\mu<\infty$, so that (\ref{LIM}) holds (see, e.g., Remark \ref{I_EX} above or Remark 3.3 in \cite{ALP}). Below, we construct a regenerative Markov chain satisfying condition~(\ref{LIM}) with $\beta_1, \beta_2, \ldots$ as above, given in advance.

Let $F$ be an~arbitrary long-tailed distribution function (see (\ref{EL})). Put $i_0:=\min\{i\in\mathbb{N}_+ : \beta_i>0\}$ and choose a sequence $\{m_n\}\subset\{i_0, i_0+1, \ldots\}$ to be nondecreasing and such that
\begin{equation}\label{M_N}
\frac{1-F(n+m_n)}{1-F(n)}\to 1 \quad \text{as} \quad n\to\infty.
\end{equation}
We define recursively an increasing sequence $\{v_n\}\subset\mathbb{N}_+$ as
$$ v_n:=\left\{ 
\begin{array}{ll}
1 & ,\quad\text{for} \quad n=1;\\
v_{n-1}+m_{v_{n-1}} &,\quad\text{for} \quad n\geqslant 2,
\end{array} \right.$$
and put $n(x):=\min\{n\in\mathbb{N}_+ : v_n > x\}$ for $x\in\mathbb{R}$. Note that $v_{n(x)-1} \leqslant x < v_{n(x)}$ for all $x\geqslant 1$.

Let $\{X_n\}$ be a Markov chain with the state space $\mathbb{N}$, such that $X_0=0$ and the transition probabilities are given by:
\begin{itemize}
\item
$\P(0,0)=F(1)$ and $\P(0,v_n)=F(v_{n+1})-F(v_n)$, for $n\in\mathbb{N}_+$,
\item
$\P(v_n, 0)=\beta_1 / \sum_{j=1}^{m_{v_n}} \beta_j$ and $\P(v_n, v_n+i-1)= \beta_{i} / \sum_{j=1}^{m_{v_n}} \beta_j$, for $n\in\mathbb{N}_+$ and \mbox{$i\in\{2,3,\ldots , m_{v_n}\}$},
\item
$\P(v_n+1,0)= \P(v_n +i-1, v_n+i-2)=1$, for $n\in\mathbb{N}_+$ and  $i\in\{3,4,\ldots , m_{v_n}\}$.
\end{itemize}
Figure \ref{FIG} illustrates the dynamics of this process.
Note that $\{X_n\}$ is a nondelayed regenerative process with the regeneration set $R=\{0\}$. Moreover, since $\E (Y_0 \, | \, X_1=v_n) \leqslant m_{v_n}+1$,
for every $n\in\mathbb{N}_+$, and
\begin{equation*}
\E (Y_0 \, | \, X_1=v_n)=\sum_{i=1}^{m_{v_n}} (i+1) \frac{\beta_i}{\sum_{j=1}^{m_{v_n}} \beta_j}
= \frac{1}{\sum_{j=1}^{m_{v_n}} \beta_j} \cdot \sum_{i=1}^{\infty}( i+1)\beta_i 
\leqslant 4 \cdot \sum_{i=1}^{\infty} i \beta_i 
 < \infty,
\end{equation*}
for all sufficiently large $n\in\mathbb{N}_+$,
the condition $\mu = \E Y_0 < \infty$ holds.

\begin{figure}
\centering
\includegraphics[width=14cm]{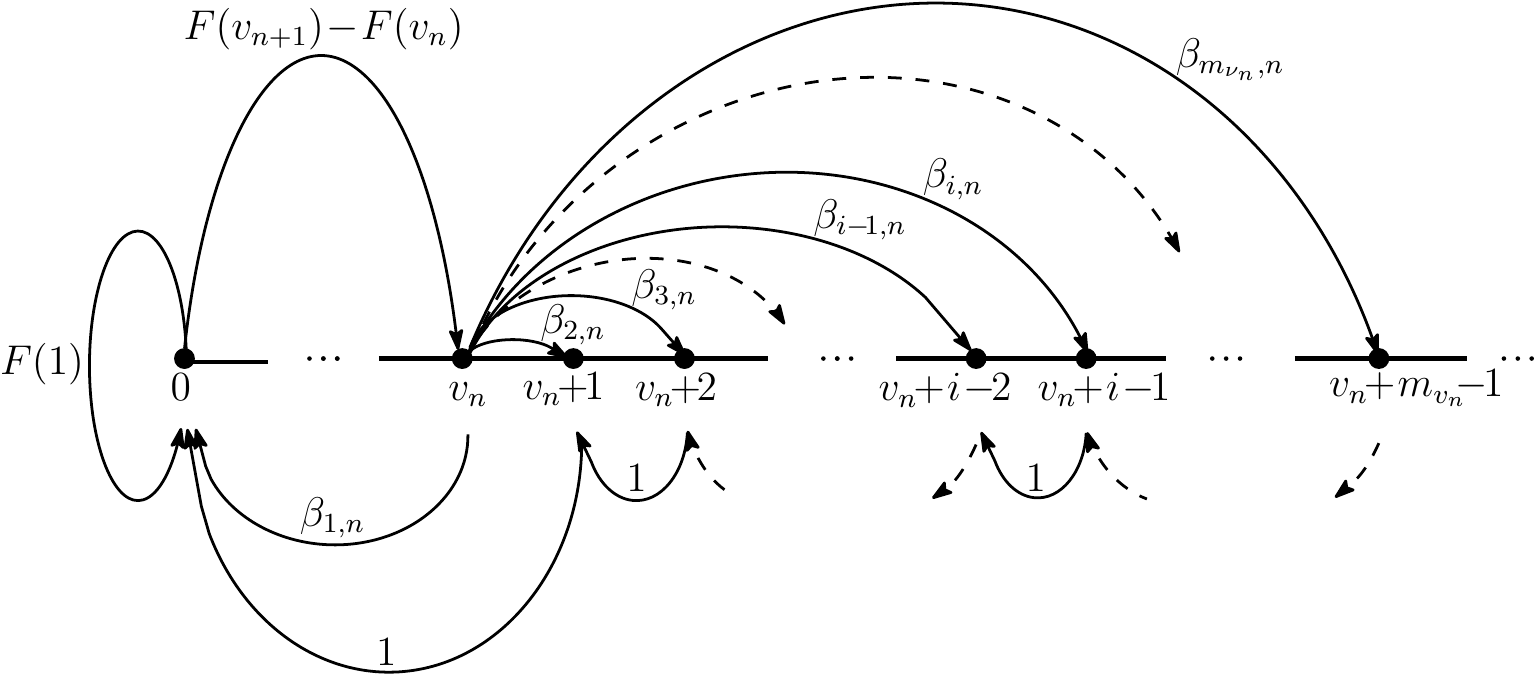}
\caption{The dynamics of the Markov chain from Example \ref{EX_CON}. Here, $n\in\mathbb{N}_+$ is arbitrary and \mbox{$\beta_{i,n} := \beta_i / \sum_{j=1}^{m_{v_n}} \beta_j$} for any $i\in\{1,2,\ldots, m_{v_n}\}$.}\label{FIG}
\end{figure}

In order to describe the phantom distribution function $G(x)=\P(\zeta_0\leqslant x)^{1/\mu}$, observe that 
$$1-F(v_{n(x)+1})=\P\left(X_1 > v_{n(x)}\right)\leqslant \P\left(\zeta_0 > x\right) \leqslant \P\left(X_1 > v_{n(x)-1}\right)=1-F\left(v_{n(x)}\right).$$
Thus we have
$$\frac{1-F(v_{n(x)+1})}{1-F(v_{n(x)-1})} \leqslant \frac{\P(\zeta_0 > x)}{1-F(x)} \leqslant \frac{1-F(v_{n(x)})}{1-F(v_{n(x)})} = 1, $$
which, combined with (\ref{M_N}), implies that
$$  \frac{\P\left(\zeta_0 > x\right)}{1-F(x)}  \to 1 \quad \text{as} \quad x\to\infty.$$
The strictly tail-equivalence of $G$ and $F^{1/\mu}$ follows.

We will show that $\{X_n\}$ satisfies (\ref{LIM}) with the sequence $\{\beta_i\}$.
Note that
\begin{eqnarray*}
\beta_i(x)&=&\P\left(\zeta_0^{(i+1)}\leqslant x < \zeta_0^{(i)} \, \Big| \, \zeta_0>x \right)\\
&=& \P\left(\zeta_0^{(i+1)}\leqslant x < \zeta_0^{(i)} \, \Big| \, \zeta_0 \geqslant v_{n(x)} \right) \cdot \P\left(\zeta_0 \geqslant v_{n(x)} \, \big| \, \zeta_0>x\right) \\
&& +   \P\left(\zeta_0^{(i+1)}\leqslant x < \zeta_0^{(i)} \, \Big| \,  v_{n(x)} > \zeta_0 > x \right) \cdot \P\left( v_{n(x)} > \zeta_0 \, \big| \, \zeta_0>x\right).
\end{eqnarray*}
Since $\zeta_0 \geqslant v_{n(x)}$ if, and only if, $X_1 \geqslant v_{n(x)}$, for all $x>v_2$ we obtain
\begin{eqnarray*}
\P\left(\zeta_0 \geqslant v_{n(x)} \, \big| \, \zeta_0>x\right) & \geqslant & \P\left(\zeta_0 \geqslant v_{n(x)} \, \big| \, \zeta_0 \geqslant v_{n(x)-1}\right) \\
&=&\P\left(X_1 \geqslant v_{n(x)} \, \big| \, X_1 \geqslant v_{n(x)-1}\right)=\frac{1-F(v_{n(x)-1})}{1-F(v_{n(x)-2})}.
\end{eqnarray*}
Combining this fact with the definition of $\{v_n\}$ and with condition (\ref{M_N}), we conclude that
$$\lim_{x\to\infty}\P\left(\zeta_0 \geqslant v_{n(x)} \, \big| \, \zeta_0>x\right) =1
\quad \text{and thus} \quad
\lim_{x\to\infty}\P\left( v_{n(x)} > \zeta_0  \, \big| \, \zeta_0>x\right) = 0.$$
Observe that we also have
\begin{eqnarray*}
\lefteqn{\P\left(\zeta_0^{(i+1)}\leqslant x < \zeta_0^{(i)} \, \Big| \, \zeta_0 \geqslant v_{n(x)} \right)}\\
&=& \sum_{k=n(x)}^\infty \P\left(\zeta_0^{(i+1)}\leqslant x < \zeta_0^{(i)} \, \Big| \, v_k \leqslant \zeta_0 < v_{k+1}\right)\cdot \P\left(v_k \leqslant \zeta_0 < v_{k+1} \,\Big|\, \zeta_0 \geqslant v_{n(x)} \right)\\
&=& \sum_{k=n(x)}^\infty \frac{\beta_i}{\sum_{j=1}^{m_{v_k}}\beta_j } \cdot \frac{F(v_{k+1}) - F(v_k)}{1 - F(v_{n(x)})}.
\end{eqnarray*}
Combining the above equality with the inequality
$$ 1 \leqslant \left(\sum_{j=1}^{m_{v_k}}\beta_j \right)^{-1} \leqslant \left(\sum_{j=1}^{m_{v_{n(x)}}}\beta_j\right)^{-1} \leqslant 1+\varepsilon,$$
true for all small $\varepsilon>0$, large $x$
and $k \geqslant n(x)$, keeping in mind that
$$\sum_{k=n(x)}^\infty (F(v_{k+1}) - F(v_k)) = 1 - F(v_{n(x)}),$$
we get
\begin{eqnarray*}
\beta_i \leqslant \lim_{x\to\infty} \sum_{k=n(x)}^\infty \frac{\beta_i}{\sum_{j=1}^{m_{v_k}}\beta_j} \cdot \frac{F(v_{k+1}) - F(v_k)}{1 - F(v_{n(x)})} \leqslant \beta_i (1+\varepsilon).
\end{eqnarray*}
Summarizing,
\begin{eqnarray*}
\lim_{x\to\infty}\beta_i(x)  & = & \lim_{x\to\infty} \P\left(\zeta_0^{(i+1)}\leqslant x < \zeta_0^{(i)} \, \Big| \, \zeta_0>v_{n(x)} \right) \\
&=& \lim_{x\to\infty} \sum_{k=n(x)}^\infty \frac{\beta_i}{\sum_{j=1}^{m_{v_k}}\beta_j} \cdot \frac{F(v_{k+1}) - F(v_k)}{1 - F(v_{n(x)})} = \beta_i.
\end{eqnarray*}

\section*{Acknowledgement}
The author would like to thank Adam Jakubowski for all suggestions and comments.

\end{document}